\theoremstyle{plain}
\newtheorem{thm}[subsection]{Theorem}
\newtheorem{prop}[subsection]{Proposition}
\newtheorem{conj}[subsection]{Conjecture}
\theoremstyle{definition}
\newtheorem{rk}[subsection]{Remark}
\newtheorem{ex}[subsection]{Example}
\numberwithin{equation}{section}
\newcommand{\R}{\mathbb{R}}
\newcommand{\C}{\mathbb{C}}
\newcommand{\PP}{\mathbb{P}}
\newcommand{\N}{\mathbb{N}}
\begin{document} 

\title [Codimension two complete intersections and Hilbert-Poincar\'e series]
{ Codimension two complete intersections and Hilbert-Poincar\'e series } 

\author[Gabriel Sticlaru]{Gabriel Sticlaru}
\email{gabrielsticlaru@yahoo.com } 

\subjclass[2010]{13D40, 14J70, 14Q10, 32S25}

\keywords{
projective hypersurfaces, singularities, Milnor algebra, Hilbert-Poincar\'{e} series, log concavity, free resolutions}

\begin{abstract} We investigate the relation between codimension two smooth complete intersections in a projective space and some naturally associated graded algebras. We give some examples of log-concave polynomials  and we propose two conjectures for these algebras.
\end{abstract}

\maketitle
 
\section{Introduction}

Let $S=\C[x_0,...,x_n]$ be the graded ring of polynomials in $x_0,,...,x_n$ with complex coefficients and denote by $S_r$ the vector space of homogeneous polynomials in $S$ of degree $r$. 
For any polynomial $f \in S_r$ we define the {\it Jacobian ideal} $J_f \subset S$ as the ideal spanned by the partial derivatives $f_0,...,f_n$ of $f$ with respect to $x_0,...,x_n$. 

The Hilbert-Poincar\'{e} series of a graded $S$-module $M$ of finite type is defined by 
\begin{equation} 
\label{eq2}
HP(M)(t)= \sum_{k\geq 0} \dim M_k\cdot t^k 
\end{equation} 
and it is known, to be a rational function of the form 
\begin{equation} 
\label{eq3}
HP(M)(t)=\frac{P(M)(t)}{(1-t)^{n+1}}=\frac{Q(M)(t)}{(1-t)^{d}}.
\end{equation}

For any polynomial $f \in S_r$ we define the corresponding graded {\it Milnor} (or {\it Jacobian}) {\it algebra} by
\begin{equation} 
\label{eq1}
M(f)=S/J_f.
\end{equation}
The hypersurface $V(f):f=0$ is smooth of degree $d$ if and only if $\dim M(f) <\infty$, and  then it is known that
\begin{equation} 
\label{HPsmooth}
HP(M(f))(t)=\frac{(1-t^{d-1})^{n+1}}{(1-t)^{n+1}}.
\end{equation}

Our examples and  the second conjecture refer to log-concavity and unimodal property. Before listing these examples we recall some basic notions.
 
A sequence $a_0,a_1, ..., a_m$ of  real numbers is said to be {\it log-concave} 
(resp. {\it strictly log-concave}) 
if it verifies $a_k^2 \geq a_{k-1}a_{k+1}$ (resp.    $a_k^2 > a_{k-1}a_{k+1}$) 
for $k=1,2,...,m-1$. An infinite sequence $a_k$, $k \in \N$ is (strictly) 
log-concave if any truncation of it is (strictly) log-concave.

 Such sequences play an important role in Combinatorics and Algebraic Geometry, see for instance the recent paper
\cite{H}. Recall that a sequence $a_0,...,a_m$ of real numbers is said to be {\it unimodal} if there is an integer $i$ between $0$ and $m$ such that
$$a_0 \leq a_1\leq ...\leq a_{i-1} \leq a_i \geq a_{i+1} \geq ...\geq a_m.$$
A nonnegative log-concave sequence with no internal zeros (i.e. a sequence for which the indices of the nonzero elements form a set of consecutive integers), is known to be unimodal, see \cite{H} and the references there.

A polynomial $P(t) \in \R[t]$, 
$P(t) = \sum_{j=0}^{j=m}a_jt^j,$
 with coefficients $a_j$ for $0\leq j \leq m$, is said to be log-concave (resp. unimodal) if the sequence of its coefficients is log-concave (resp. unimodal). For other examples with log-concave polynomials, see \cite{St1}.
For Singular language and applications, see \cite{DGP} and \cite{GP}.

\section{Main results}

Assume from now on that $V(f)$ is smooth of degree $d$ and let $g \in S_e$ be another homogeneous polynomial of degree $e$. We say that $V(f,g)=V(f) \cap V(g)$ is a smooth complete intersection if 
any point $p\in V(f,g)$ is smooth on both $V(f)$ and $V(g)$ and the corresponding tangent spaces $T_pV(f)$ and $T_p(V(g)$ are distinct. Our first result is the following.

\begin{thm}
\label{thm1} Let $m_2(f,g)$ be the ideal generated by all the $2 \times 2$ minors in the Jacobian matrix of the mapping $(f,g):\C^{n+1} \to\C^2.$ Define $A(f,g)= S/((f)+m_2(f,g))$ and $B(f,g)=
S/((f)+(g)+m_2(f,g))$.

With this notation, the following conditions are equivalent.

\medskip

\noindent (i) $V(f,g)=V(f) \cap V(g)$ is a smooth complete intersection.

\medskip

\noindent (ii) $\dim A(f,g)<\infty.$

\medskip

\noindent (iii) $\dim B(f,g)<\infty.$

\end{thm}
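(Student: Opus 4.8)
The plan is to convert each of (i), (ii), (iii) into the emptiness of a projective zero locus and then to identify two a priori different loci by means of the Euler relation. I will use the standard finiteness criterion: for a homogeneous ideal $I \subset S$ one has $\dim_{\C} S/I < \infty$ if and only if $V(I) = \emptyset$ in $\PP^n$, equivalently the only zero of $I$ in $\C^{n+1}$ is the origin. Writing $Z_A = V\big((f)+m_2(f,g)\big)$ and $Z_B = V\big((f)+(g)+m_2(f,g)\big)$ in $\PP^n$, condition (ii) becomes $Z_A = \emptyset$ and (iii) becomes $Z_B = \emptyset$. Finally, the $2\times 2$ minors of the Jacobian of $(f,g)$ all vanish at a point $p$ precisely when its two rows, the gradients $\nabla f(p)$ and $\nabla g(p)$, are linearly dependent, i.e. when the Jacobian has $\rank \le 1$ at $p$.

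First I would establish (i) $\eqv$ (iii) by unwinding the definition of a smooth complete intersection. Because $V(f)$ is smooth, every $p \in V(f)$ satisfies $\nabla f(p) \neq 0$, so $T_p V(f)$ is a hyperplane; for a point $p \in V(f) \cap V(g)$ the two demands in the definition---that $p$ be smooth on $V(g)$, i.e. $\nabla g(p) \neq 0$, and that $T_p V(f) \neq T_p V(g)$, i.e. the gradients be non-proportional---hold together exactly when $\nabla f(p)$ and $\nabla g(p)$ are linearly independent. Hence $V(f,g)$ fails to be a smooth complete intersection if and only if some point of $V(f) \cap V(g)$ has linearly dependent gradients, that is, if and only if $Z_B \neq \emptyset$; this is (i) $\eqv$ (iii).

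It remains to prove (ii) $\eqv$ (iii), and for this I would show the sharper statement $Z_A = Z_B$. The inclusion $Z_B \subseteq Z_A$ is trivial. For the reverse, let $p \in Z_A$, so $f(p) = 0$ and the Jacobian has $\rank \le 1$ at $p$; smoothness of $V(f)$ gives $\nabla f(p) \neq 0$, so the rank condition forces $\nabla g(p) = \lambda\,\nabla f(p)$ for some $\lambda \in \C$. Applying the Euler relations $\sum_i x_i f_i = d\,f$ and $\sum_i x_i g_i = e\,g$ and evaluating at $p$, I get $e\,g(p) = \sum_i p_i g_i(p) = \lambda \sum_i p_i f_i(p) = \lambda\,d\,f(p) = 0$, and since $e \ge 1$ this yields $g(p) = 0$, so $p \in Z_B$. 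Thus $Z_A = Z_B$, whence $Z_A = \emptyset$ if and only if $Z_B = \emptyset$, which is (ii) $\eqv$ (iii).

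I expect the only non-formal step to be the reverse inclusion $Z_A \subseteq Z_B$: it is not clear \emph{a priori} that deleting the generator $g$ from the defining ideal leaves the projective zero locus unchanged. The Euler-relation computation is exactly what closes this gap, showing that on the smooth hypersurface $V(f)$ the linear dependence of the gradients already forces $g$ to vanish. The remaining ingredients---the finiteness criterion and the translation of the smoothness conditions into a rank condition on the Jacobian---are routine.
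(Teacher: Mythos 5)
Your proposal is correct and follows essentially the same route as the paper: the key equivalence (ii) $\eqv$ (iii) is established by the identical argument, namely that the two zero loci coincide because on the smooth hypersurface $V(f)$ the proportionality $\nabla g(p)=\lambda\nabla f(p)$ together with the Euler relation forces $g(p)=0$. The only difference is that for (i) $\eqv$ (iii) the paper cites a classical fact (Proposition (6.39) in \cite{D}) whereas you unwind the definition of smooth complete intersection into a rank condition on the Jacobian directly --- a harmless and self-contained elaboration of the same content.
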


If $f=x_0$, then $ A(f,g)= B(f,g)=S'/J_{g'}=M(g')$, where 
$S'=\C[x_1,...,x_n]$ and $g'(x_1,...,x_n)=g(0,x_1,...,x_n)$.
However, in general we have the following.

\begin{prop}
\label{prop1} With the above notation, the ideals $I(f,g)= (f)+m_2(f,g)$ and $J(f,g)=
(f)+(g)+m_2(f,g)$ have the same radical, but are distinct in general.
When the equivalent conditions of  Theorem \ref{thm1} hold, this common radical is the maximal ideal $(x_0,...,x_n)$.
\medskip

\end{prop}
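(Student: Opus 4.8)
The plan is to establish the three assertions in turn, treating the equality of radicals as the heart of the matter; throughout I write $\mathfrak{m}=(x_0,\dots,x_n)$ and regard $V(-)$ as a zero locus in $\C^{n+1}$. For the equality of radicals I would argue at the level of varieties. Since $I(f,g)\subseteq J(f,g)$, the inclusion $V(J)\subseteq V(I)$ is free, so the task is the reverse inclusion: every $p$ with $f(p)=0$ and all minors $f_ig_j-f_jg_i$ vanishing at $p$ must satisfy $g(p)=0$. Vanishing of every $2\times 2$ minor says the Jacobian of $(f,g)$ has rank $\le 1$ at $p$, i.e. the rows $\nabla f(p)$ and $\nabla g(p)$ are linearly dependent. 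If $p=0$ then $g(p)=0$ is automatic. If $p\ne 0$, then smoothness of $V(f)$ forces $\nabla f(p)\ne 0$ (the common zeros of $f_0,\dots,f_n$ reduce to the origin), so dependence of the rows gives $\nabla g(p)=\lambda\,\nabla f(p)$ for some $\lambda\in\C$; feeding this into Euler's identity $\sum_i x_ih_i=(\deg h)\,h$ for both $f$ and $g$ yields $e\,g(p)=\sum_i p_ig_i(p)=\lambda\sum_i p_if_i(p)=\lambda\,d\,f(p)=0$, hence $g(p)=0$ because $e\ge 1$. This gives $V(I)=V(J)$ and therefore $\sqrt{I(f,g)}=\sqrt{J(f,g)}$. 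I expect this Euler-relation step, together with the use of smoothness to exclude a vanishing gradient, to be the one genuinely delicate point; the rest is formal.

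To see that the ideals are distinct in general, note $J=I+(g)$, so $I=J$ precisely when $g\in I$, and it is enough to produce a single smooth complete intersection with $g\notin I$. A degree count settles this whenever $d\ge 3$: the generators of $I$ are $f$ in degree $d$ and the minors in degree $d+e-2>e$, so the degree-$e$ piece is $I_e=f\cdot S_{e-d}$, which is $0$ for $e<d$ and otherwise consists only of multiples of $f$; any nonzero $g$ that is not a multiple of $f$ then lies outside $I$. Concretely I would take $f=x_0^3+x_1^3+x_2^3$ and $g=x_0$, a transverse and hence smooth intersection, for which $m_2(f,g)=(x_1^2,x_2^2)$ and $I=(x_0^3+x_1^3+x_2^3,\,x_1^2,\,x_2^2)$ plainly omits the linear form $g=x_0$, while $J=(x_0,\,x_1^2,\,x_2^2)\supsetneq I$.

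Finally, to identify the common radical under the hypotheses of Theorem \ref{thm1}, I would invoke that theorem directly: a smooth complete intersection gives $\dim_\C B(f,g)=\dim_\C S/J(f,g)<\infty$. For a homogeneous ideal such finiteness is equivalent to $V(J)\subseteq\{0\}$, and since every generator vanishes at the origin we get $V(J)=\{0\}$, i.e. $\sqrt{J(f,g)}=\mathfrak{m}$. Combined with the first part this forces $\sqrt{I(f,g)}=\sqrt{J(f,g)}=(x_0,\dots,x_n)$, completing the proof.
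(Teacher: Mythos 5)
Your proof is correct, and its core coincides with the paper's: the equality of radicals is obtained in both cases by observing that a point of $V(I(f,g))$ has proportional differentials $df(p)$ and $dg(p)$, that smoothness of $V(f)$ rules out $df(p)=0$, and that the Euler relation then forces $g(p)=0$; your version is slightly more careful in that it treats the origin of the affine cone separately and records why $e\geq 1$ matters. Where you genuinely diverge is on the claim that the ideals are \emph{distinct in general}: the paper does not prove this abstractly but defers it to Singular computations (Example \ref{ex1}, with $d=2$, $e=3$, where the normal form of $g$ modulo $I(f,g)$ is checked to be nonzero by machine). You instead give a degree-count argument --- the generators of $I(f,g)$ live in degrees $d$ and $d+e-2$, so for $d\geq 3$ one has $I(f,g)_e=f\cdot S_{e-d}$ and any $g$ not a multiple of $f$ lies outside $I(f,g)$ --- together with the hand-checkable instance $f=x_0^3+x_1^3+x_2^3$, $g=x_0$, where $I=(x_0^3,x_1^2,x_2^2)$ visibly omits the linear form $g$. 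This buys a computer-free and more general verification of distinctness (covering all $d\geq 3$ at once), at the cost of not exhibiting the paper's specific low-degree cases such as $d=2$. Your final step, deducing $\sqrt{J(f,g)}=(x_0,\dots,x_n)$ from $\dim B(f,g)<\infty$ via the classical implication (i) $\Rightarrow$ (iii) of Theorem \ref{thm1}, is only implicit in the paper and involves no circularity, since that implication is the part the paper quotes from the literature rather than the part it derives from Proposition \ref{prop1}.
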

\begin {proof} (Theorem  \ref{thm1} and Proposition \ref{prop1}).

The equivalence of (i) and (iii) in Theorem  \ref{thm1} is a classical fact, see for instance Proposition (6.39) in \cite{D}. To prove the equivalence of (ii) and (iii) in Theorem  \ref{thm1}, it is enough to prove the first part of the claim in  Proposition \ref{prop1}, namely that the ideals
$I(f,g)$ and $J(f,g)$ have the same radical.
To do this, it is enough by Hilbert's Nulstellensatz, see for instance Theorem (2.14) in \cite{D}  to show that the corresponding zero sets
$V(I(f,g))$ and $V(J(f,g))$ in $\PP^n$ coincide. Moreover, it is clear that $V(J(f,g)) \subset  V(I(f,g))$.

Conversely, note that a point $p$ in $V(I(f,g))$  satisfies $f(p)=0$ and the differentials $df(p)$ and $dg(p)$ are proportional. Since $df(p)\ne 0$ as $V(f)$ is supposed to be smooth, it follows that there is $\lambda \in \C$ such that $dg(p)= \lambda df(p)$. This implies via the Euler formula that $g(p)=0$, hence $p \in V(J(f,g))$.

The second part of  the claim  in Proposition \ref{prop1} is dealt with in the next section by means of examples computed using Singular.

\end {proof}

\begin{rk}
\label{rk1}
Note that the equivalent condition in Theorem \ref{thm1} do not imply $\dim S/m_2(f,g) <\infty$.
Take for instance $f=x_0$ and $g=x_0^2+x_1^2+ ...+x_n^2.$
\end{rk}

As a consequence of Conjecture \ref{conj1} we can prove the following formulas for the Hilbert-Poincar\'e series of the graded $S$-modules $A(f,g)$ and $B(f,g)$.
The result is given in the form $HP(M)(t)=\frac{P(M)(t)}{(1-t)^4}$ and hence it is enough to give the polynomial $P(M)$.

\begin{prop}
\label{prop2} 

 The Hilbert-Poincar\'{e} series of a graded $S$-module $A(f,g)$ and $B(f,g)$ depend only on the degrees $d$ and $e$, when the equivalent conditions of Theorem \ref{thm1} hold. More precisely, one has the following in $\PP^3$.

\medskip

\noindent  (i) $P(A(f,g))(t)=1-[t^d+6t^{d+e-2}]+[4t^{d+2e-3}+4t^{2d+e-3}+6t^{2d+e-2}]-[t^{d+3e-4}+t^{2d+2e-4}+4t^{2d+2e-3}
+t^{3d+e-4}+4t^{3d+e-3}]+[t^{2d+3e-4}+t^{3d+2e-4}+t^{4d+e-4}]$.

\medskip

\noindent (ii) $P(B(f,g))(t)=1-[t^{e}+t^{d}+6t^{d+e-2}]+[4t^{d+e-1}+4t^{d+2e-3}+4t^{2d+e-3}]-[t^{d+3e-4}+t^{2d+2e-4}+4t^{2d+2e-3}
+t^{3d+e-4}]+[t^{2d+3e-4}+t^{3d+2e-4}]$.

\end{prop}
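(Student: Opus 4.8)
The plan is to read each numerator $P$ off a graded free resolution of the corresponding module, using the additivity of the Hilbert--Poincar\'e series along exact sequences. Recall that if $M$ is a finitely generated graded module over $S=\C[x_0,x_1,x_2,x_3]$ admitting a graded free resolution
\begin{equation*}
0 \to F_4 \to F_3 \to F_2 \to F_1 \to F_0 \to M \to 0, \qquad F_i=\bigoplus_j S(-a_{ij}),
\end{equation*}
then additivity gives $P(M)(t)=\sum_{i=0}^{4}(-1)^i\sum_j t^{a_{ij}}$, the numerator in \eqref{eq3} with denominator $(1-t)^{4}$ (here $n+1=4$). When the equivalent conditions of Theorem \ref{thm1} hold, both $A(f,g)$ and $B(f,g)$ have finite length, hence depth $0$, so by the Auslander--Buchsbaum formula their projective dimension is exactly $4$ and the resolution has length $4$ as displayed. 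Thus it suffices to record the twists $a_{ij}$ at each homological degree and form the alternating sum.

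The low homological degrees can be written down explicitly, which both motivates the shape and locates the difficulty. One has $F_0=S$, and $F_1$ is free on the minimal generators of the defining ideal: for $A(f,g)$ these are $f$ (degree $d$) together with the six $2\times 2$ minors $m_{ij}=f_ig_j-f_jg_i$ (each of degree $d+e-2$), so $F_1=S(-d)\oplus S(-(d+e-2))^{6}$; for $B(f,g)$ one adjoins the generator $g$, i.e. an extra summand $S(-e)$. These account for the degree-one terms $t^d+6t^{d+e-2}$ (resp.\ $t^e+t^d+6t^{d+e-2}$). The first syzygies are also identifiable by hand. Common to both modules are the eight generalized Eagon--Northcott syzygies of the minors: the four relations $\sum_{\mathrm{cyc}} g_k\,m_{ij}=0$ of degree $d+2e-3$ and the four relations $\sum_{\mathrm{cyc}} f_k\,m_{ij}=0$ of degree $2d+e-3$, indexed by the $\binom{4}{3}=4$ triples. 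For $B(f,g)$ the remaining four first syzygies are the Euler relations $e f_i\,g - d g_i\,f -\sum_k x_k m_{ik}=0$ of degree $d+e-1$ (a polynomial identity following from $\sum_k x_k f_k=df$, $\sum_k x_k g_k=eg$), which become genuine syzygies once $g$ is a generator; for $A(f,g)$, where $g$ is not a generator, eliminating $g$ between indices $i$ and $j$ yields instead the six relations $d f\,m_{ij}-\sum_k x_k(f_j m_{ik}-f_i m_{jk})=0$ of degree $2d+e-2$, indexed by the $\binom{4}{2}=6$ pairs. Since the coefficient of $f$ in these is the nonzero minor $d\,m_{ij}$, they cannot lie in the submodule generated by the Eagon--Northcott syzygies, so they are minimal. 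These reproduce the $F_2$ terms of both (i) and (ii).

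With the shape of the full resolution supplied by Conjecture \ref{conj1}, the remaining steps are formal: substitute the twists at each homological degree into $\sum_{i}(-1)^i\sum_j t^{a_{ij}}$, collect by degree, and compare with the tabulated polynomials. I would also run the built-in consistency checks to validate the bookkeeping: since $A(f,g)$ and $B(f,g)$ have finite length, each $HP$ is a polynomial, so $P(A(f,g))(t)$ and $P(B(f,g))(t)$ must be divisible by $(1-t)^{4}$; equivalently $P(1)=P'(1)=P''(1)=P'''(1)=0$, which one checks directly from (i) and (ii). A further structural relation comes from the multiplication sequence $A(f,g)(-e)\xrightarrow{\ \cdot g\ } A(f,g)\twoheadrightarrow B(f,g)$, whose kernel is $K=(I(f,g):g)/I(f,g)$; additivity then predicts $P(B(f,g))=(1-t^{e})P(A(f,g))+P(K)$.

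The main obstacle is Conjecture \ref{conj1} itself: proving that the minimal graded free resolutions of $A(f,g)$ and $B(f,g)$ have the asserted universal shape for \emph{every} smooth complete intersection of the given degrees $d,e$, rather than only for the examples computed in Singular. Granting it, the extraction of the numerators is insensitive to any non-minimality, because $P(M)$ depends only on the alternating sum of the graded Betti numbers and so is unchanged by consecutive cancellations in a (possibly non-minimal) resolution. The genuinely hard part, were one to attempt a direct proof, is to exhibit the higher syzygy modules $F_3,F_4$ explicitly and uniformly in $f,g$ --- presumably by realizing the resolution as a mapping cone of the Koszul complex on $f$ (and $g$) with the generalized Eagon--Northcott complex resolving $S/m_2(f,g)$ --- and to prove that the resulting complex is exact and carries exactly the stated twists, independently of the choice of $f$ and $g$.
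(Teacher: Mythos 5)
Your proposal is correct and follows essentially the same route as the paper: both derive $P(A(f,g))$ and $P(B(f,g))$ by taking the alternating sum of the twists in the graded free resolution of Conjecture \ref{conj1}, using additivity of the Hilbert--Poincar\'e series and $HP(S(-k))(t)=t^k/(1-t)^4$. Your additional material --- the explicit identification of the generators and first syzygies, the divisibility checks at $t=1$, and the observation that the whole statement remains conditional on Conjecture \ref{conj1} --- goes beyond what the paper records but does not change the argument.
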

\begin {proof}
We explain now briefly how to get Proposition \ref{prop2} from Conjecture \ref{conj1}.

To get the formulas, we start with the resolution and get
$$HP(M)(t)=HP(S)(t)-HP(R_1)(t)+HP(R_2)(t)-HP(R_3)(t)+HP(R_4)(t),$$ 
where $HP(S)(t)=\frac{1}{(1-t)^4}$.

Then we use the well-known formulas $HP(N_1\oplus N_2)(t)=HP(N_1)(t)+HP( N_2)(t)$, 

$HP(N^p(-q))(t)=pt^qHP(N)(t)$, $HP(S(-k))(t)=\frac{t^k}{(1-t)^4}$. 

\end{proof}

\begin{rk}
\label{rk2}
Note that if $f=x$ and $g$ of degree $e$, we obtain the Hilbert-Poincar\'{e} series for any smooth hypersurfaces of degree $e$, in $\PP^2$:   

$S(t)=\frac{(1-t^{e-1})^{3}}{(1-t)^{3}}=(1+t+t^2+ \cdots +t^{e-2})^3.$
\end{rk}

\section{Examples and conjectures} 

The examples and the conjectures concern codimension $2$ complete intersections in $\PP^3$ with coordinates $(x:y:z:w)$.

\begin{ex} The polynomial $g$ does not belong to the ideal  $I(f,g)$ in general, i.e. $I(f,g) \ne J(f,g)$.
\label{ex1}
Let $V(f): f=x^2+y^2+z^2+w^2=0$ Fermat smooth surface of degree two and nodal surfaces $2A_1$ type, 
$V(g): g=xzw+(z+w)y^2+x^3+x^2y+xy^2+y^3=0$. 

Indeed Radical(I(f,g))=Radical(J(f,g))=Ideal(x,y,z,w).
The ideal $I(f,g)$ has seven generators, 

$I(f,g)=(G_1,G_2,G_3,G_4,G_5,G_6,G_7)$, where:

$G_1=2x^3-2x^2y+2xy^2-2y^3+4xyz+4xyw-2yzw$, 

$G_2=2xy^2-6x^2z-4xyz-2y^2z+2x^2w-2z^2w$,

$G_3=2xy^2+2x^2z-6x^2w-4xyw-2y^2w-2zw^2$, 

$G_4=2y^3-2x^2z-4xyz-6y^2z-4yz^2+2xyw-4yzw$,

$G_5=2y^3+2xyz-2x^2w-4xyw-6y^2w-4yzw-4yw^2$,

$G_6=2y^2z+2xz^2-2y^2w-2xw^2$,

$G_7=x^2+y^2+z^2+w^2$.  

One can check the identity:
$g=(7/2)G_1+(1/2)G_2+(-1/2)G_3+2G_4+(-2)G_5+3G_6+(-6x + 8y + 8z - 8w)G_7+
(-8z^3 - 28xyw + 2y^2w + xzw + 7yzw + 9z^2w + 12xw^2 - 16yw^2 - 9zw^2 + 8w^3)$
where Normal Form (remainder modulo $I(f,g)$) of $g$ is not zero, so $g$ does not belong to $I(f,g)$.

\end{ex}

\begin{ex} The Hilbert-Poincar\'{e} series of a graded $S$-module $A(f,g)$ and $B(f,g)$ depend only on the degrees $d$ and $e$,  when the equivalent conditions of Theorem  \ref{thm1} hold.
\label{ex2}

The table below  contain singular cubic projective surface in $\PP^3$ displayed with the type of singularity.

\noindent
\begin{center}
 \begin{tabular} {|l|l|}
  \hline                      
  Type      &   polynomial  $ g$ of degree three  \\ 
  \hline
$A_2$ 			&	$ (x+y+z)(x+2y+3z)w+xyz$ \\
$2A_1$ 			&	$ xzw+(z+w)y^2+x^3+x^2y+xy^2+y^3$\\
$A_1+A_2$ 	&	$ x^3+y^3+x^2y+xy^2+y^2z+xzw$ \\
$A_4$ 			&	$ y^2z+yx^2-z^3+xzw$ \\
$3A_1$ 			&	$ y^3+y^2(x+z+w)+4xzw $\\
$A_1+A_3$ 	&	$ wxz+(x+z)(y^2-x^2) $ \\ 
$A_5$ 			&	$	wxz+y^2z+x^3-z^3 $\\
$D_4$				&	$	w(x+y+z)^2+xyz $\\
$2A_1+A_2$	&	$	wxz+y^2(x+y+z) $\\
$A_1+A_4$		&	$	wxz+y^2z+yx^2 $\\
$D_5$				&	$	wx^2+xz^2+y^2z $\\
$4A_1$			&	$	w(xy+xz+yz)+xyz $\\
$A_1+2A_2$	&	$	wxz+xy^2+y^3 $\\
$2A_1+A_3$	&	$	wxz+(x+z)y^2 $ \\ 
$A_1+A_5$		&	$	wxz +y^2z+x^3 $\\ 
$E_6$				&	$	wx^2+xz^2+y^3 $\\
$3A_2$			&	$	wxz+y^3 			$\\
$\tilde{E_8}$   		& $ y^3+2z^3+4w^3  $ \\

\hline
\end{tabular} 
\end{center} 

\begin{enumerate}

\item 
Let $d=2, e=3$, $V(f): f=x^2+y^2+z^2+w^2=0$ and $V(g):g=0$ any singular cubic projective surface in $\PP^3$ from the table above.
In all cases, $g$ not belongs to $I(f,g)$, $I(f,g)$ and $J(f,g)$ have the same radical Ideal(x,y,z,w), but 
$A(f,g)$ and $B(f,g)$  are distinct, with different Hilbert-Poincar\'{e} finite series, log-concave polynomials.

$HP(A(f,g))(t)=1+4t+9t^2+10t^3+5t^4+t^5.$

$HP(B(f,g))(t)=1+4t+9t^2+ 9t^3+5t^4+t^5.$

\item 
Let $d=2, e=3$, $V(f): f=x^2+y^2+z^2+w^2=0$ and $V(g):g=0$ any singular cubic  projective surface in $\PP^3$ from the list below.

$A_3: $ $g=xzw+(x+z)(y^2-x^2-z^2)=0,$ 

$2A_2:$ $g=x^3+y^3+x^2y+xy^2+xzw=0.$\\
In these cases, $g$ not belongs to $I(f,g)$. $I(f,g)$ and $J(f,g)$ have the same radical other that Ideal(x,y,z,w), but $A(f,g)$ and $B(f,g)$  are distinct, with different Hilbert-Poincar\'{e} infinite series.

$HP(A(f,g))(t)=1+4t+9t^2+10t^3+5t^4+2(t^5+t^6+\cdots$,  

$HP(B(f,g))(t)=1+4t+9t^2+ 9t^3+5t^4+2(t^5+t^6+\cdots$. 

\item
Let $d=3, e=3$, $V(f): f=x^3+y^3+z^3+w^3=0$ and $V(g):g=0$ any singular cubic projective surface in $\PP^3$ from the table above.
In all cases, $g$ not belongs to $I(f,g)$, $I(f,g)$ and $J(f,g)$ have the same radical Ideal(x,y,z,w), but 
$A(f,g)$ and $B(f,g)$  are distinct, with different Hilbert-Poincar\'{e} finite series, log-concave polynomials.

$HP(A(f,g))(t)=1+4t+10t^2+19t^3+25t^4+22t^5+12t^6+3t^7.$

$HP(B(f,g))(t)=1+4t+10t^2+18t^3+21t^4+16t^5+8t^6+2t^7.$

\item
Let $d=1, e=3$, $V(f): f=x=0$ and $V(g):g=0$ any singular cubic projective surface in $\PP^3$ from the table above.
We obtain the Hilbert-Poincar\'{e} series for any smooth hypersurfaces of degree $3$ in $\PP^2$:  
$S(t)=\frac{(1-t^2)^{3}}{(1-t)^{3}}=1+3t+3t^2+t^3.$

\end{enumerate}

\end{ex}

\bigskip
These examples motivate our following conjectures.

The best way to understand the graded $S$-modules $A(f,g)$ and $B(f,g)$
is to construct their free resolutions. Based on the free resolution, we can compute the    
Hilbert-Poincar\'{e} series for each graded algebra.
We propose the following conjecture, concern codimension $2$ complete intersections in $\PP^3$
\begin{conj} 
\label{conj1}

Let $A(f,g)$ and $B(f,g)$ with $\dim A(f,g) and \dim B(f,g) <\infty$. 
Then the minimal graded free resolutions of these algebras are the following.
\begin{equation} 
\label{res}
0 \to R_4 {\rightarrow}  R_3 {\rightarrow} R_2 {\rightarrow} R_1 {\rightarrow} S \to M \to 0
\end{equation}

\medskip
\noindent (i) If M=$A(f,g)$, 

$R_1=S(-d)\oplus S^6[-(d+e-2)]$, 

$R_2=S^4[-(d+2e-3)]\oplus S^4[-(2d+e-3)]\oplus S^6[-(2d+e-2)],$

$R_3= S[-(d+3e-4)]\oplus S[-(2d+2e-4)] \oplus S^4[-(2d+2e-3)] \oplus S[-(3d+e-4)]$ 

$\oplus S^4[-(3d+e-3)], $

$R_4= S[-(2d+3e-4)]\oplus S[-(3d+2e-4)]\oplus S[-(4d+e-4)].$

\medskip

\noindent (ii) If M=$B(f,g)$, 

$R_1=S(-e)\oplus S(-d)\oplus S^6[-(d+e-2)],$

$R_2=S^4[-(d+e-1)]\oplus S^4[-(d+2e-3)]\oplus S^4[-(2d+e-3)],$

$R_3=S[-(d+3e-4)]\oplus S[-(2d+2e-4)]\oplus S^4[-(2d+2e-3)]\oplus S[-(3d+e-4)],$

$R_4=S[-(2d+3e-4)]\oplus S[-(3d+2e-4)].$
\medskip

\end {conj}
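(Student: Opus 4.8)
The plan is to establish part (i) directly from the determinantal structure of the minors and then to derive part (ii) from it by a mapping cone. The starting observation for (i) is that the hypothesis $\dim A(f,g)<\infty$ pins down the codimension of the minors ideal. Since $m_2(f,g)$ is generated by the maximal ($2\times2$) minors of a $2\times4$ matrix, one has $\codim m_2(f,g)\le 3$; and since $I(f,g)=(f)+m_2(f,g)$ has codimension $4$, adjoining the single form $f$ can raise the codimension by at most one, so $\codim m_2(f,g)=3$ exactly. Thus $m_2(f,g)$ is a codimension-$3$ determinantal ideal of maximal minors, hence Cohen--Macaulay, and the Eagon--Northcott complex $\mathrm{EN}_\bullet$ is its minimal free resolution. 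I would read off its graded Betti numbers directly: the six minors in degree $d+e-2$; the eight first syzygies, split into the two Euler-type families $\sum f_i m_{jk}=0$ and $\sum g_i m_{jk}=0$ in degrees $2d+e-3$ and $d+2e-3$; and the three second syzygies in degrees $d+3e-4$, $2d+2e-4$, $3d+e-4$. Because $S/m_2(f,g)$ is Cohen--Macaulay of dimension one and $f$ is a parameter, $f$ is a nonzerodivisor on it, so the mapping cone of $\cdot f\colon \mathrm{EN}_\bullet(-d)\to\mathrm{EN}_\bullet$ resolves $A(f,g)$. For $d,e\ge 2$ every entry of this cone lies in the maximal ideal, so it is already minimal, and $R_k=\mathrm{EN}_k\oplus\mathrm{EN}_{k-1}(-d)$ reproduces the terms in (i) after routine bookkeeping of shifts.

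For part (ii) the key is the identity $I(f,g):g=J_f$, which yields the short exact sequence
$$0\to M(f)(-e)\xrightarrow{\ \cdot g\ }A(f,g)\to B(f,g)\to0.$$
The inclusion $J_f\subseteq I(f,g):g$ follows at once from the two Euler relations $\sum_j x_jf_j=df$ and $\sum_j x_jg_j=eg$: multiplying the second by $f_i$ and substituting $f_ig_j=f_jg_i+m_{ij}$ gives
$$e\,g\,f_i=d\,f\,g_i+\sum_j x_j\,m_{ij}\in(f)+m_2(f,g),$$
so $gf_i\in I(f,g)$ for every $i$. Since $V(f)$ is smooth, $f_0,\dots,f_3$ is a regular sequence, so $M(f)=S/J_f$ is a graded complete intersection resolved by the Koszul complex $K_\bullet=K_\bullet(f_0,\dots,f_3)$ with $K_k=S^{\binom4k}[-k(d-1)]$.

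Granting the colon identity, the resolution of $B(f,g)$ is the mapping cone of a comparison map $K_\bullet(-e)\to\mathrm{Res}\,A(f,g)$ lifting $\cdot g$. Tracking degrees, the surviving Koszul terms $K_0(-e)=S(-e)$ and $K_1(-e)=S^4[-(d+e-1)]$ supply exactly the new generator $g$ and the new degree-$(d+e-1)$ first syzygies of $B(f,g)$, while the three remaining Koszul terms
$$K_2(-e)=S^6[-(2d+e-2)],\quad K_3(-e)=S^4[-(3d+e-3)],\quad K_4(-e)=S[-(4d+e-4)]$$
have the same shifts as the ``$f$-shifted'' summands $\mathrm{EN}_1(-d)$, $\mathrm{EN}_2(-d)$, $\mathrm{EN}_3(-d)$ occurring in $\mathrm{Res}\,A(f,g)$. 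The identity $e\,g\,f_i=d\,f\,g_i+\sum_j x_j m_{ij}$ forces the comparison map to be, up to the unit $1/e$, the identity on each of these three matched blocks, so they cancel in the cone; what remains is precisely the resolution displayed in (ii), and a count of alternating ranks, $1-8+12-7+2=0$, confirms the Euler characteristic.

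The main obstacle is the reverse inclusion $I(f,g):g\subseteq J_f$. The easy direction only gives $I(f,g):g\subseteq J_f:g$, and since $M(f)$ is Gorenstein Artinian the colon $J_f:g$ is strictly larger than $J_f$; so one must genuinely exploit the determinantal structure of $m_2(f,g)$ rather than the inclusion $I(f,g)\subseteq J_f$ alone. Equivalently, the claim is that $\dim_{\C}A(f,g)-\dim_{\C}B(f,g)=(d-1)^4=\dim_{\C}M(f)$, and I would try to prove this independently by local duality on the Gorenstein hypersurface ring $S/(f)$ --- in which $m_2(f,g)$ is $\mathfrak{m}$-primary and $A(f,g)=\bigl(S/(f)\bigr)/m_2(f,g)$ --- or by a linkage argument relating $m_2(f,g)$ to the complete intersection $J_f$ on $V(f)$. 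A secondary, more routine obstacle is checking that the comparison map induces units on exactly the three matched blocks and on no others, i.e. that the reduced cone is minimal; this should follow by writing out the Koszul-to-Eagon--Northcott comparison map explicitly using the relation above.
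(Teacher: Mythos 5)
The paper does not prove this statement: it is posed as a conjecture, supported only by Singular computations on the examples of Section~3, so there is no argument of the author's to measure yours against. Judged on its own terms, your part (i) is essentially a complete proof and would promote half of the conjecture to a theorem for $d,e\ge 2$ (for $d=1$ the Eagon--Northcott differentials contain unit entries and the resolution is no longer minimal, consistently with Remark~\ref{rk2}). The chain of standard facts is correctly assembled: $\codim m_2(f,g)\le 3$ for maximal minors of a $2\times 4$ matrix; $\codim m_2(f,g)\ge \codim I(f,g)-1=3$ by Krull's principal ideal theorem, since $I(f,g)$ is $\mathfrak{m}$-primary under the hypotheses; hence $S/m_2(f,g)$ is Cohen--Macaulay of dimension one with minimal resolution the Eagon--Northcott complex $0\to S^3\to S^8\to S^6\to S$ carrying exactly the twists you list; $f$ is then a parameter, hence a nonzerodivisor, and the mapping cone of $\cdot f$ is automatically minimal because every comparison entry has positive degree $d$. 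The sums $R_k=\mathrm{EN}_k\oplus \mathrm{EN}_{k-1}(-d)$ match (i) term by term.

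Part (ii), however, has a genuine gap that you yourself flag: everything rests on the colon identity $I(f,g):g=J_f$, and you prove only the easy inclusion $J_f\subseteq I(f,g):g$ (the Euler computation $e\,g\,f_i=d\,f\,g_i+\sum_j x_j m_{ij}$ is correct and is indeed the right source of the comparison map). Without the reverse inclusion, the sequence $0\to M(f)(-e)\to A(f,g)\to B(f,g)\to 0$ is not known to be exact at the left-hand term --- the annihilator of $g$ in $A(f,g)$ could a priori be strictly larger than $J_f/I(f,g)$ --- and then neither the Koszul resolution of the kernel nor the mapping cone for $B(f,g)$ is available. The identity is certainly consistent with the paper's data (in Example~\ref{ex2} one finds $HP(A(f,g))(t)-HP(B(f,g))(t)=t^3$ for $d=2$, $e=3$ and $t^3(1+t)^4$ for $d=e=3$, i.e.\ exactly $t^e\,HP(M(f))(t)$), so the target is very plausibly true, but the suggested routes (local duality on the Gorenstein ring $S/(f)$, linkage) are only named, not carried out. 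A second, smaller hole is the minimality of the reduced cone: one must verify that the comparison map $K_\bullet(-e)\to \mathrm{Res}\,A(f,g)$ restricts to isomorphisms on precisely the three matched blocks $S^6[-(2d+e-2)]$, $S^4[-(3d+e-3)]$, $S[-(4d+e-4)]$ and has no unit entries elsewhere; you assert this ``up to the unit $1/e$'' but do not exhibit the map. Until both points are settled, only part (i) of the conjecture is proved.
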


\begin{conj} 
\label{conj2}
When the equivalent conditions of Theorem  \ref{thm1} hold,
the Hilbert-Poincar\'{e} finite series
$HP(A(f,g))$ and $HP(B(f,g))$ are log-concave polynomials with no internal zeros, in particular  they are unimodal.

\end {conj}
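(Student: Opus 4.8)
The plan is to reduce Conjecture~\ref{conj2} to an explicit inequality for a two‑parameter family of integer sequences, and then to verify that inequality. Assuming Conjecture~\ref{conj1}, Proposition~\ref{prop2} gives the numerator $P(M)(t)$ of $HP(M)(t)=P(M)(t)/(1-t)^4$ explicitly as a polynomial whose exponents and coefficients are explicit functions of $d$ and $e$, for $M=A(f,g)$ and $M=B(f,g)$. Writing $1/(1-t)^4=\sum_{m\ge 0}\binom{m+3}{3}t^m$ and $P(M)(t)=\sum_j c_j t^{e_j}$, the Hilbert function is recovered by the convolution
\begin{equation*}
a_k=\dim M_k=\sum_j c_j\binom{k-e_j+3}{3},\qquad \binom{m+3}{3}:=0\ \text{for}\ m<0 .
\end{equation*}
Equivalently, the fourth finite difference $a_k-4a_{k-1}+6a_{k-2}-4a_{k-3}+a_{k-4}$ equals $c_k$, which vanishes outside the finite exponent set of $P(M)$. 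Hence $a_k$ is a single cubic polynomial in $k$ (with coefficients polynomial in $d,e$) on each interval between consecutive exponents of $P(M)$, and the whole statement becomes a finite list of polynomial inequalities in $d,e,k$. I would first dispose of the degenerate case $d=1$ (or $e=1$): there $A=B=M(g')$ with $HP=(1+t+\cdots+t^{e-2})^3$ by Remark~\ref{rk2}, the threefold convolution of the all‑ones sequence; since the convolution of nonnegative log‑concave sequences with no internal zeros is again log‑concave with no internal zeros (a classical fact, see \cite{H}), this case is immediate, and I assume $d,e\ge 2$ from now on.

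Next I would settle the ``no internal zeros'' claim by locating the support of the sequence and proving strict positivity on it. From the last term $R_4$ of the resolution in Conjecture~\ref{conj1} one reads off the top nonzero degree (the socle degree) $\rho$ as the largest shift occurring in the resolution minus $4$; for instance for $B(f,g)$ one gets $\rho=2d+3e-8$ when $d\le e$. For small $k$ one has $a_k=\binom{k+3}{3}>0$, so the task is to prove $a_k\ge 1$ for every $0\le k\le\rho$, bounding the negative binomial contributions against the positive ones. This rules out internal zeros, and, combined with log‑concavity below, yields unimodality as recalled in the Introduction.

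The heart of the argument is log‑concavity. Set $D_k:=a_k^2-a_{k-1}a_{k+1}$; I must show $D_k\ge 0$ for every $k$. On the interior of each interval of constancy of the cubic formula for $a_k$, the degree‑$6$ and degree‑$5$ terms of $a_k^2$ and of $a_{k-1}a_{k+1}$ cancel, so $D_k$ is a polynomial in $k$ of degree at most $4$ with coefficients explicit in $d,e$, and its nonnegativity on the relevant range of $k$ is elementary to certify on each range. The transitions, where $k-1,k,k+1$ do not all lie in one interval, are finitely many and checked one by one. The main obstacle is organizational rather than conceptual: the relative order of the roughly twelve exponents of $P(M)(t)$ depends on the sign of $d-e$ and on the sizes of $d,e$, so the argument splits into several regimes, and for $A(f,g)$ the absence of the $d\leftrightarrow e$ symmetry that $B(f,g)$ enjoys roughly doubles the casework. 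I expect the genuinely delicate point to be $D_k\ge 0$ near the mode of the sequence, where $a_{k-1}a_{k+1}$ is closest to $a_k^2$: there the degree‑$4$ polynomial in $k$ takes small positive values and the estimate must be carried out exactly rather than by crude bounds. The finitely many small pairs $(d,e)$ for which the generic ordering of exponents fails can be verified directly, as in Example~\ref{ex2}.

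A more conceptual route that I would keep in reserve is to prove that $A(f,g)$ and $B(f,g)$ satisfy a Lefschetz‑type property, so that log‑concavity of the Hilbert function would follow from Hodge--Riemann‑type positivity in the spirit of \cite{H}. Establishing such a property for these specific determinantal‑plus‑complete‑intersection algebras seems substantially harder than the direct verification above, however, so I would treat it as a conjectural explanation rather than as a proof.
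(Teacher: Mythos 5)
This statement is labeled a conjecture in the paper, and the paper offers no proof of it: the only support given is the computational evidence of Example~\ref{ex2} (Hilbert--Poincar\'e series for $(d,e)=(2,3)$ and $(3,3)$ checked in Singular to be log-concave). So there is no argument in the paper to compare yours against, and the real question is whether your proposal closes the gap. It does not, for two reasons. First, your entire reduction rests on Proposition~\ref{prop2}, which the paper derives \emph{from} Conjecture~\ref{conj1}; that conjecture (the shape of the minimal free resolutions of $A(f,g)$ and $B(f,g)$) is itself open, so even a completed version of your argument would only prove the implication Conjecture~\ref{conj1} $\Rightarrow$ Conjecture~\ref{conj2}, not the statement itself. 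You should state this conditionality explicitly rather than fold it into ``Assuming Conjecture~\ref{conj1}.''

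Second, and more seriously, the core of your argument is announced but not executed. The correct observations --- that the fourth difference of $a_k$ is supported on the exponent set of $P(M)$, that $a_k$ is piecewise cubic in $k$, and that $D_k=a_k^2-a_{k-1}a_{k+1}$ drops to degree $\le 4$ on each interval of validity --- only reduce the problem to a finite but substantial list of polynomial inequalities in $d,e,k$; you never verify a single one of them, nor the positivity $a_k\ge 1$ on $[0,\rho]$ needed for ``no internal zeros.'' Nonnegativity of $p(k)^2-p(k-1)p(k+1)$ for a cubic $p$ on a prescribed integer range is emphatically not automatic (it fails for generic cubics), so the phrase ``elementary to certify on each range'' is doing all the work, precisely at the point you yourself identify as delicate (near the mode). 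Likewise the case analysis over the possible orderings of the dozen exponents of $P(M)$ as functions of $d,e$ is not enumerated. As it stands, your proposal is a plausible roadmap --- arguably the natural brute-force one given Proposition~\ref{prop2} --- but the statement remains unproved; the Lefschetz-type route you mention in closing is, as you say, only a heuristic.
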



\section{Conclusion}

The main results of this paper are Theorem  \ref{thm1} and Proposition \ref{prop1} in $\PP^n$ and Proposition \ref{prop2} in  $\PP^3$.
 
By analogy with Milnor algebra associated with one homogeneous polynomial in $\PP^n$, we define two graded algebras $A(f, g)$ and $B(f, g)$ associated with homogeneous polynomials $f$ and $g$, where projective hypersurface $V(f):f=0$ is smooth. We replace the {\it Jacobian ideal} with two ideals $I(f,g)$ and $J(f,g)$, generated by $f$, $g$ and $2 \times 2$ minors in the Jacobian matrix $(f,g)$. For these graded algebras we consider Hilbert-Poincar\'{e} series that encapsulates information about the dimensions of homogeneous components. When $f=x_0$, the  Hilbert-Poincar\'{e} series corresponds to the  smooth hypersurface in $\PP^{n-1}$, as in the Milnor algebra case. 

The main conclusion of the given examples \ref{ex1} and \ref{ex2} are Conjectures \ref{conj1} and \ref{conj2} in $\PP^3$.

When the equivalent conditions of Theorem \ref{thm1} hold, the Hilbert-Poincar\'{e} series of a graded module $A(f,g)$ and $B(f,g)$ depend only on the degrees of $f$ and $g$,
as in Milnor algebra smooth case, and are log-concave polynomials.

Finally, we consider this construction as a new method to generate many, infinite families of log-concave polynomials.

\bigskip

\noindent \textbf{Acknowledgment}\\[2mm] 

The author is grateful to A. Dimca for suggesting this problem.


\end{document}